\newtheorem{theorem}{Theorem}[section]
\newtheorem{definition}[theorem]{Definition}
\newtheorem{proposition}[theorem]{Proposition}
\newtheorem{lemma}[theorem]{Lemma}
\begin{document}

\title{A maximality result for orthogonal quantum groups}

\author{Teodor Banica}
\address{T.B.: Department of Mathematics, Cergy-Pontoise University, 95000 Cergy-Pontoise, France. {\tt teodor.banica@u-cergy.fr}}

\author{Julien Bichon}
\address{J.B.: Department of Mathematics, Clermont-Ferrand University, Campus des Cezeaux, 63177 Aubiere Cedex, France. {\tt bichon@math.univ-bpclermont.fr}}

\author{Beno\^ it Collins}
\address{B.C.: Department of Mathematics, Lyon 1 University, and University of Ottawa, 585 King Edward, Ottawa, ON K1N 6N5, Canada. {\tt bcollins@uottawa.ca}}

\author{Stephen Curran}
\address{S.C.: Department of Mathematics, University of California, Los Angeles, CA 90095, USA. {\tt curransr@math.ucla.edu}}

\subjclass[2010]{16T05}
\keywords{Orthogonal quantum group}

\begin{abstract}
We prove that the quantum group inclusion $O_n \subset O_n^*$ is ``maximal'', where $O_n$ is the usual orthogonal group and $O_n^*$ is the half-liberated orthogonal quantum group, in the sense that there is no intermediate compact quantum group $O_n\subset G\subset O_n^*$. In order to prove this result, we use: (1) the isomorphism of projective versions $PO_n^*\simeq PU_n$, (2) some maximality results for classical groups, obtained by using Lie algebras and some matrix tricks, and (3) a short five lemma for cosemisimple Hopf algebras.
\end{abstract}

\maketitle

\section*{Introduction}

Quantum groups were introduced by Drinfeld \cite{dri} and Jimbo \cite{jim} in order to study ``non-classical'' symmetries of complex systems.  This was followed by the fundamental work of Woronowicz \cite{wo1}, \cite{wo2} on compact quantum groups.  The key examples which were constructed by Drinfeld and Jimbo, and further analyzed by Woronowicz, were $q$-deformations $G_q$ of classical Lie groups $G$.  The idea is as follows: consider the commutative algebra $A = C(G)$.  For a suitable choice of generating ``coordinates'' of this algebra, replace commutativity by the $q$-commutation relations $ab = qba$, where $q > 0$ is a parameter.  In this way one obtains an algebra $A_q = C(G_q)$, where $G_q$ is a quantum group.  When $q =1$ one then recovers the classical group $G$.

For $G=O_n,U_n,S_n$ it was later discovered by Wang \cite{wa1}, \cite{wa2} that one can also obtain compact quantum groups by ``removing'' the commutation relations entirely.  In this way one obtains ``free'' versions $O_n^+,U_n^+,S_n^+$ of these classical groups.  This construction has been axiomatized in \cite{bsp} in terms of the ``easiness'' condition for compact quantum groups, and has led to several applications in probability. See \cite{ez2}, \cite{ez3}.

It is clear from the construction that one has $G \subset G^+$ for $G = O_n,U_n,S_n$.  Since $G^+$ can be viewed a ``liberation'' of $G$, it is natural to wonder whether there are any intermediate quantum groups $G \subset G' \subset G^+$, which could be seen as ``partial liberations'' of $G$.  For $O_n,S_n$ this problem has been solved in the case of ``easy'' intermediate quantum groups \cite{bve}, \cite{ez1}.   For $S_n$ there are no intermediate easy quantum groups $S_n \subset G' \subset S_n^+$.  However for $O_n$ there is exactly one intermediate easy quantum group $O_n \subset O_n^* \subset O_n^+$, called the ``half-liberated'' orthogonal group, which was constructed in \cite{bsp}.  At the level of relations among coordinates, this is constructed by replacing the commutation relations $ab = ba$ with the half-commutation relations $abc = cba$.

In the larger category of compact quantum groups it is an open problem whether there are intermediate quantum groups $S_n \subset G \subset S_n^+$, or $O_n \subset G \subset O_n^+$ with $G \neq O_n^*$.  This is an important question for better understanding the ``liberation'' procedure of \cite{bsp}. At $n = 4$ (the smallest value at which $S_n \neq S_n^+$), it follows from the results in \cite{bbi} that the inclusion $S_n \subset S_n^+$ is indeed maximal, and it was conjectured in \cite{bbc} that this is the case, for any $n\in\mathbb N$.  Likewise the inclusion $O_n \subset O_n^* \subset O_n^+$ is known to be maximal at $n = 2$, thanks to the results of Podle\'s in \cite{pod}.  In general it is likely that these two problems are related to each other via combinatorial invariants \cite{bve} or cocycle twists \cite{tga}.

In this paper we make some progress towards solving this problem in the orthogonal case, by showing that the inclusion $O_n\subset O_n^*$ is maximal.  A key tool in our analysis will be the fact the ``projective version'' of $O_n^*$ is the same as that of the classical unitary group $U_n$.  By using a version of  the five lemma for cosemisimple Hopf algebras (following ideas from \cite{abi}, \cite{aga}), we are thus able to reduce the problem to showing that the inclusion of groups $PO_n \subset PU_n$ is maximal.  We then solve this problem by using some Lie algebra techniques inspired from \cite{afg}, \cite{dyn}.

The paper is organized as follows: Section 1 contains background and preliminaries.  In Section 2 we prove that $PO_n\subset PU_n$ is maximal.  In Section 3 we prove a short five lemma for cosemisimple Hopf algebras, which may be of independent interest.  We then use this in Section 4 to prove our main result, namely that $O_n\subset O_n^*$ is maximal.   

\medskip\noindent\textbf{Acknowledgements.}  Part of this work was completed during the Spring 2011 program ``Bialgebras in free probability'' at the Erwin Schr\"odinger Institute in Vienna, and T.B., B.C., S.C. are grateful to the organizers for the invitation.  The work of T.B., J.B., B.C. was supported by the ANR grants ``Galoisint'' and ``Granma'', the work of B.C. was supported by an NSERC Discovery grant and an ERA grant, and the work of S.C. was supported by an NSF postdoctoral fellowship and by the NSF grant DMS-0900776.

\section{Orthogonal quantum groups}

In this section we briefly recall the free and half-liberated orthogonal quantum groups from \cite{wa1}, \cite{bsp}, and the notion of ``projective version'' for a unitary compact quantum group. We will work at the level of Hopf $*$-algebras of representative functions.

First we have the following fundamental definition, arising from Woronowicz' work \cite{wo1}.

\begin{definition}
A \textit{unitary Hopf algebra} is a $*$-algebra $A$ which is generated by elements $\{u_{ij}|1 \leq i,j \leq n\}$ such that $u = (u_{ij})$ and $\overline{u}=(u_{ij}^*)$ are unitaries, and such that:
\begin{enumerate}
 \item There is a $*$-algebra map $\Delta:A \to A \otimes A$ such that $\Delta(u_{ij}) = \sum_{k=1}^n u_{ik} \otimes u_{kj}$.

\item There is a $*$-algebra map $\varepsilon:A \to \mathbb C$ such that $\varepsilon(u_{ij}) = \delta_{ij}$.

\item There is a $*$-algebra map $S:A \to A^{op}$ such that $S(u_{ij}) = u_{ji}^*$.
\end{enumerate}
If $u_{ij} = u_{ij}^*$ for $1 \leq i,j \leq n$, we say that $A$ is an \textit{orthogonal Hopf algebra}.
\end{definition}

It follows that $\Delta,\varepsilon,S$ satisfy the usual Hopf algebra axioms.  The motivating examples of unitary (resp. orthogonal) Hopf algebra is $A = \mathcal R(G)$, the algebra of representative function of a compact subgroup $G \subset U_n$ (resp. $G \subset O_n$).  Here the standard generators $u_{ij}$ are the coordinate functions which take a matrix to its $(i,j)$-entry.

In fact every commutative unitary Hopf algebra is of the form $\mathcal R  (G)$ for some compact group $G \subset U_n$.  In general we use the suggestive notation ``$A = \mathcal R(G)$'' for any unitary (resp. orthogonal) Hopf algebra, where $G$ is a \textit{unitary (resp. orthogonal) compact quantum group}.  Of course any group-theoretic statements about $G$ must be interpreted in terms of the Hopf algebra $A$.  

It can be shown that shown that a unitary Hopf algebra has an enveloping $C^*$-algebra, satisfying Woronowicz' axioms in \cite{wo1}. In general there are several ways to complete a unitary Hopf algebra into a $C^*$-algebra, but in this paper we will ignore this problem and work at the level of unitary Hopf algebras.

The following examples of Wang \cite{wa1} are fundamental to our considerations.

\begin{definition}
The universal unitary Hopf algebra $A_u(n)$ is the universal $*$-algebra generated by  elements $\{u_{ij}|1 \leq i,j \leq n\}$ such that the matrices
$u = (u_{ij})$ and $\overline{u}=(u_{ij}^*)$ in $M_n(A_u(n))$ are unitaries.

The universal orthogonal Hopf algebra $A_o(n)$ is the universal $*$-algebra generated by self-adjoint elements $\{u_{ij}|1 \leq i,j \leq n\}$ such that the matrix $u = (u_{ij})_{1 \leq i,j \leq n}$ in $M_n(A_o(n))$ is orthogonal.
\end{definition}

The existence of the Hopf algebra structural morphisms follows from the universal properties of $A_u(n)$ and $A_o(n)$.  As discussed above, we use the notations $A_u(n)= \mathcal R(U_n^+)$ and 
$A_o(n) = \mathcal R(O_n^+)$, where $U_n^+$ is the \textit{free unitary quantum group} and $O_n^+$ is the \textit{free orthogonal quantum group}.

Note that we have $\mathcal R(O_n^+) \twoheadrightarrow \mathcal R(O_n)$, in fact $\mathcal R(O_n)$ is the quotient of $\mathcal R(O_n^+)$ by the relations that the coordinates $u_{ij}$ commute.  At the level of quantum groups, this means that we have an inclusion $O_n \subset O_n^+$.  

In other words, $\mathcal R(O_n^+)$ is obtained from $\mathcal R(O_n)$ by ``removing commutativity'' among the coordinates $u_{ij}$.  It was discovered in \cite{bsp} that one can obtain a natural orthogonal quantum group by requiring instead that the coordinates ``half-commute''.

\begin{definition}
The half-liberated othogonal Hopf algebra $A_o^*(n)$ is the universal $*$-algebra generated by self-adjoint elements $\{u_{ij}|1 \leq i,j \leq n\}$ which half-commute in the sense that $abc = cba$ for any $a,b,c \in \{u_{ij}\}$, and such that the matrix $u = (u_{ij})_{1 \leq i,j \leq n}$ in $M_n(A_o^*(n))$ is orthogonal.
\end{definition}

The existence of the Hopf algebra structural morphisms again follows from the universal properties of $A_o^*(n)$.  We use the notation $A_o^*(n) = \mathcal R(O_n^*)$, where $O_n^*$ is the \textit{half-liberated orthogonal quantum group}.  Note that we have $\mathcal R(O_n^+) \twoheadrightarrow \mathcal R(O_n^*)  \twoheadrightarrow \mathcal R(O_n)$, i.e. $O_n \subset O_n^* \subset O_n^+$.  As discussed in the introduction, our aim in this paper is to show that the inclusion $O_n \subset O_n^*$ is maximal.  A key tool in our analysis will be the projective version of a unitary quantum group, which we now recall.

\begin{definition}
The projective version of a unitary compact quantum group $G \subset U_n^+$ is the quantum group $PG\subset U_{n^2}^+$, having as basic coordinates the elements $v_{ij,kl}=u_{ik}u_{jl}^*$.
\end{definition}

In other words, $P\mathcal R(G)=\mathcal R (PG)\subset \mathcal R(G)$ is the subalgebra generated by the elements $v_{ij,kl}=u_{ik}u_{jl}^*$. It is clearly a Hopf $*$-subalgebra of $\mathcal R(G)$. In the case where $G \subset U_n$ is classical we recover of course the well-known formula $PG=G/(G\cap \mathbb T)$, where $\mathbb T\subset U_n$ is the group of norm one multiples of the identity.  

The following key result was proved in \cite{bve}.

\begin{theorem}
We have an isomorphism $PO_n^*\simeq PU_n$.
\end{theorem}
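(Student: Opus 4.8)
The plan is to prove the sharper statement that there is an explicit isomorphism of Hopf $*$-algebras $\mathcal R(PO_n^*)\simeq\mathcal R(PU_n)$, by writing down morphisms in both directions and checking that they are mutually inverse on generators. Write $v=(v_{ij})$ for the fundamental matrix of $U_n$, so that $\mathcal R(PU_n)$ is the subalgebra of $\mathcal R(U_n)$ generated by the elements $v_{ik}v_{jl}^*$, and recall that $\mathcal R(PO_n^*)$ is generated by the elements $u_{ik}u_{jl}$ (here $u_{jl}^*=u_{jl}$).

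First I would construct a surjection $\Phi\colon\mathcal R(PO_n^*)\to\mathcal R(PU_n)$ from the ``$2\times 2$ matrix model'' of $O_n^*$. Inside $M_2(\mathcal R(U_n))$ consider the self-adjoint elements $U_{ij}=\left(\begin{smallmatrix}0&v_{ij}\\ v_{ij}^*&0\end{smallmatrix}\right)$. A routine computation shows that the matrix $U=(U_{ij})$ is orthogonal (using that both $v$ and $\bar v$ are unitary), and that the $U_{ij}$ half-commute: since $\mathcal R(U_n)$ is commutative, the two nonzero entries of $U_{ij}U_{kl}U_{mn}$ are $v_{ij}v_{kl}^*v_{mn}$ and $v_{ij}^*v_{kl}v_{mn}^*$, which are symmetric in $(i,j)\leftrightarrow(m,n)$. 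Hence the universal property of $\mathcal R(O_n^*)$ produces a $*$-algebra map $\pi\colon\mathcal R(O_n^*)\to M_2(\mathcal R(U_n))$ with $u_{ij}\mapsto U_{ij}$, and one computes $\pi(u_{ik}u_{jl})=\mathrm{diag}(v_{ik}v_{jl}^*,\,v_{ik}^*v_{jl})$. Thus $\pi$ restricts to a map from $\mathcal R(PO_n^*)$ onto the diagonal sub-$*$-algebra $\{\mathrm{diag}(a,a^*)\}$ whose first coordinate ranges over $\mathcal R(PU_n)$; composing with the first-coordinate isomorphism of that algebra onto $\mathcal R(PU_n)$ gives a surjective $*$-algebra map $\Phi$ with $\Phi(u_{ik}u_{jl})=v_{ik}v_{jl}^*$. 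Checking compatibility with $\Delta,\varepsilon,S$ on generators (using $\Delta(u_{ik}u_{jl})=\sum_{m,p}u_{im}u_{jp}\otimes u_{mk}u_{pl}$ and $S(u_{ij})=u_{ij}$) shows that $\Phi$ is a morphism of Hopf $*$-algebras.

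Next I would construct a candidate inverse. Working in $\mathcal R(O_n^*)\otimes\mathcal R(\mathbb T)=\mathcal R(O_n^*)[t,t^{-1}]$, with $t^*=t^{-1}$ central, put $V_{ij}=u_{ij}\otimes t$. Since $u$ is orthogonal and $t$ is a central unitary, both $V=(V_{ij})$ and $\bar V=(V_{ij}^*)$ are unitary, so the universal property of $\mathcal R(U_n)=A_u(n)$ yields a $*$-algebra map $\rho\colon\mathcal R(U_n)\to\mathcal R(O_n^*)[t,t^{-1}]$ with $v_{ij}\mapsto u_{ij}\otimes t$. Then $\rho(v_{ik}v_{jl}^*)=(u_{ik}\otimes t)(u_{jl}\otimes t^{-1})=u_{ik}u_{jl}\otimes 1$, so $\rho$ restricts to a $*$-algebra map $\Psi\colon\mathcal R(PU_n)\to\mathcal R(PO_n^*)$ with $\Psi(v_{ik}v_{jl}^*)=u_{ik}u_{jl}$.

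Finally, $\Phi\circ\Psi$ fixes each generator $v_{ik}v_{jl}^*$ and $\Psi\circ\Phi$ fixes each generator $u_{ik}u_{jl}$, so both composites are identity maps; hence $\Phi$ is an isomorphism of Hopf $*$-algebras, giving $PO_n^*\simeq PU_n$. I do not anticipate a serious obstacle here: the whole point is to \emph{guess} the two maps — to notice that the half-commutation relations are exactly what forces the off-diagonal ansatz $U_{ij}$ to be an orthogonal matrix, and, dually, that multiplying the self-adjoint generators $u_{ij}$ by a central unitary $t$ converts them into the entries of a unitary. Everything else is a short computation in commutative (or nearly commutative) $*$-algebras, the only mild care being the verification that $\Phi$ and $\Psi$ respect the Hopf structure.
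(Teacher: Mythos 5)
Your first half is correct, and it is a genuinely different (and nice) route to the easy half of the statement: the elements $U_{ij}=\left(\begin{smallmatrix}0&v_{ij}\\ v_{ij}^*&0\end{smallmatrix}\right)\in M_2(\mathcal R(U_n))$ are self-adjoint, form an orthogonal matrix and half-commute (your verification of half-commutation uses, crucially, that $\mathcal R(U_n)$ is commutative), so you do get a surjective Hopf $*$-algebra map $\Phi:\mathcal R(PO_n^*)\to\mathcal R(PU_n)$, $u_{ik}u_{jl}\mapsto v_{ik}v_{jl}^*$. The gap is in the inverse. You invoke ``the universal property of $\mathcal R(U_n)=A_u(n)$'', but these are different algebras: $A_u(n)=\mathcal R(U_n^+)$ is the \emph{free} unitary Hopf algebra, and $\mathcal R(U_n)$ is its commutative quotient --- and it is the commutative one that contains $\mathcal R(PU_n)$ and on which you need $\rho$ to be defined. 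A $*$-algebra map $\rho:\mathcal R(U_n)\to\mathcal R(O_n^*)[t,t^{-1}]$ with $v_{ij}\mapsto u_{ij}\otimes t$ does not exist: the $v_{ij}$ commute in $\mathcal R(U_n)$, while $(u_{ij}\otimes t)(u_{kl}\otimes t)=u_{ij}u_{kl}\otimes t^2$ and $u_{ij}u_{kl}\neq u_{kl}u_{ij}$ in general, since $A_o^*(n)$ is not commutative for $n\geq 2$ (half-commutation $abc=cba$ does not imply $ab=ba$). So the map you actually obtain from unitarity of $V$ and $\bar V$ is $A_u(n)\to A_o^*(n)\otimes\mathbb C[t,t^{-1}]$, whose restriction to the even part gives $\mathcal R(PU_n^+)\to\mathcal R(PO_n^*)$, $v_{ik}v_{jl}^*\mapsto u_{ik}u_{jl}$ --- not a map defined on $\mathcal R(PU_n)$.

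To turn this into your $\Psi$ you would still have to show that this map kills $PA_u(n)\cap\ker\bigl(A_u(n)\to\mathcal R(U_n)\bigr)$, i.e.\ that it factors through $\mathcal R(PU_n^+)\twoheadrightarrow\mathcal R(PU_n)$; commutativity of $PA_o^*(n)$ only gives factorization through the abelianization of $PA_u(n)$, and identifying that abelianization with $\mathcal R(PU_n)$ is a statement of the same depth as the theorem itself. Equivalently, what is missing is exactly the injectivity of $\Phi$, which is the nontrivial content of $PO_n^*\simeq PU_n$. The paper obtains this hard half by representation-theoretic (diagrammatic) arguments from \cite{bsp}, \cite{bve}, and later remarks (end of Section 4) that it also follows from the maximality of $PO_n\subset PU_n$ together with the short five lemma: since $PA_o^*(n)$ is commutative it equals $\mathcal R(G)$ for some group $PO_n\subset G\subset PU_n$, and $G=PO_n$ is excluded because it would force $A_o^*(n)\simeq\mathcal R(O_n)$. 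Your surjection $\Phi$ would slot nicely into either argument, but as it stands the proposal does not prove the isomorphism.
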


\begin{proof}
First, thanks to the half-commutation relations between the standard coordinates on $O_n^*$, for any $a,b,c,d\in\{u_{ij}\}$ we have $abcd=cbad=cdab$. Thus the standard coordinates on the quantum group $PO_n^*$ commute ($ab\cdot cd=cd\cdot ab$), so this quantum group is actually a classical group. A representation theoretic study, based on the diagrammatic results in \cite{bsp}, allows then to show this classical group is actually $PU_n$. See \cite{bve}.
\end{proof}

Note that in fact the techniques developed in the present paper enable us to give a very simple proof of this theorem, avoiding the diagramatic techniques from \cite{bsp}, \cite{bve}. See the last remark in Section 4.

\section{Classical group results}

In this section we prove that the inclusion $PO_n\subset PU_n$ is maximal in the category of compact groups (we assume throughout the paper that $n\geq 2$, otherwise there is nothing to prove). We will see later on, in Sections 3 and 4 below, that this result can be ``twisted'', in order to reach to the maximality of the inclusion $O_n\subset O_n^*$.

Let $\tilde O_{n}$ be the group generated by $O_{n}$ and $\mathbb T\cdot I_{n}$  (the group of multiples of identity of norm one). That is, $\tilde O_n$ is the preimage of $PO_n$ under the quotient map $U_n \twoheadrightarrow PU_n$.  Let $\widetilde{SO}_n \subset \tilde O_n$ be the group generated by $SO_n$ and $\mathbb T\cdot I_n$.  Note that $\tilde{O}_n = \widetilde{SO}_n$ if $n$ is odd, and if $n$ is even then $\tilde{O}_n$ has two connected components and $\widetilde{SO}_n$ is the component containing the identity.

It is a classical fact that a compact matrix group is a Lie group, so $\widetilde{SO}_{n}$ is a Lie group. Let $\mathfrak{so}_{n}$ (resp. $\mathfrak{u}_{n}$)  be the real Lie algebras of $SO_{n}$ (resp. $U_{n}$).  It is known that $\mathfrak{u}_n$ consists of the matrices $M\in M_n(\mathbb C)$  satisfying $M^*=-M$, and $\mathfrak{so}_n=\mathfrak{u}_n\cap M_n(\mathbb R)$. It is easy to see that the Lie algebra of $\widetilde{SO}_{n}$ is $\mathfrak{so}_{n}\oplus i\mathbb{R}$.

First we need the following lemma:

\begin{lemma}\label{soadj}
If $n \geq 2$, the adjoint representation of $SO_n$ on the space of real symmetric matrices of trace zero is irreducible.
\end{lemma}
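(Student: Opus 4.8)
The plan is to exhibit the space $V$ of real symmetric trace-zero $n\times n$ matrices as an irreducible $SO_n$-module by a direct argument with invariant subspaces. First I would note that $V$ carries the $SO_n$-invariant inner product $\langle X,Y\rangle = \operatorname{Tr}(XY)$, so any invariant subspace has an invariant complement; hence it suffices to show $V$ has no proper nonzero invariant subspace. Suppose $W\subseteq V$ is a nonzero invariant subspace and pick $0\neq X\in W$. Since $X$ is real symmetric it is orthogonally diagonalizable: conjugating by a suitable element of $O_n$ (and, if that element has determinant $-1$, by a further diagonal sign change, which lies in $O_n$ and fixes diagonal matrices) we may assume $W$ contains a nonzero diagonal trace-zero matrix $D = \operatorname{diag}(d_1,\dots,d_n)$.

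Next I would show that $W$ then contains \emph{every} diagonal trace-zero matrix, and in particular all the matrices $E_{ii}-E_{jj}$. The tool is conjugation by rotations in a coordinate $2$-plane: for the rotation $R$ by angle $\theta$ in the $(i,j)$-plane, $R D R^{-1}$ differs from $D$ only in the $2\times2$ block on rows/columns $i,j$, and averaging $RDR^{-1}$ over $\theta$ (or taking an appropriate combination, e.g. $\theta=\pi/2$) produces the matrix obtained from $D$ by replacing both $d_i$ and $d_j$ by their average $(d_i+d_j)/2$; subtracting, $W$ contains a multiple of $E_{ii}-E_{jj}$ as soon as $d_i\neq d_j$, and since $\operatorname{Tr}D=0$ not all $d_i$ are equal. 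Permuting coordinates by permutation matrices in $O_n$ (again fixing up determinants with sign flips) we get all $E_{ii}-E_{jj}\in W$, hence all diagonal trace-zero matrices. Finally, conjugating $E_{ii}-E_{jj}$ by a $45^\circ$ rotation in the $(i,j)$-plane yields an off-diagonal elementary symmetric matrix $E_{ij}+E_{ji}\in W$, and conjugating that by rotations mixing a third index $k$ produces $E_{ik}+E_{ki}$; thus $W$ contains all the standard basis matrices $E_{ii}-E_{jj}$ and $E_{ij}+E_{ji}$ of $V$, so $W=V$.

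The one case needing care is $n=2$: then $V$ is $2$-dimensional, spanned by $\operatorname{diag}(1,-1)$ and $E_{12}+E_{21}$, and the computation above still works — a $45^\circ$ rotation sends the first to (a multiple of) the second — so $V$ is irreducible over $SO_2$ as well (indeed the adjoint action factors through the double cover $SO_2\to SO_2$, $R_\theta\mapsto R_{2\theta}$, acting by rotation on the plane $V$). The main obstacle is purely bookkeeping: making sure every conjugating matrix used actually lies in $SO_n$ (not merely $O_n$), which is why at each step involving an $O_n$-element of determinant $-1$ one composes with a diagonal $\pm1$ matrix of the same determinant — such a correction lies in $O_n$, has determinant $+1$ after the composition, and commutes with or only changes signs of the relevant entries, leaving the argument intact. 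No Lie-algebra input is needed, though one could alternatively phrase the same computation infinitesimally using the generators of $\mathfrak{so}_n$.
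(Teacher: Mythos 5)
Your proof is correct. It opens the same way as the paper's: take a nonzero element of an invariant subspace (the paper works with the span $V$ of the $SO_n$-orbit of an arbitrary $X$, which is the same thing), orthogonally diagonalize it, fixing the determinant with a sign matrix, to land a nonzero trace-zero diagonal matrix inside the invariant subspace. The two arguments then diverge in how they finish. The paper only permutes the diagonal entries (via explicit signed transposition matrices in $SO_n$) and then invokes the well-known irreducibility of the standard representation of $S_n$ on trace-zero diagonal matrices to conclude that \emph{all} trace-zero diagonal matrices are present; it then closes with a slick re-diagonalization trick: any real symmetric trace-zero $Y$ is $SO_n$-conjugate to a diagonal one, so no off-diagonal generators ever need to be produced. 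You instead avoid both of these ingredients by direct computation: averaging (or combining $\theta=0$ and $\theta=\pi/2$) conjugates by rotations in a coordinate $2$-plane to extract $E_{ii}-E_{jj}$, permuting to get all of them, and then using $45^\circ$ rotations to manufacture the off-diagonal basis elements $E_{ij}+E_{ji}$, so that the invariant subspace visibly contains a full basis of $V$. Your route is more self-contained (no appeal to the $S_n$ fact) at the cost of more matrix computation; the paper's is shorter because the citation plus the re-diagonalization observation replace your last two steps. One cosmetic point: your remark that invariant subspaces admit invariant complements is not needed, since irreducibility is by definition the absence of proper nonzero invariant subspaces, which is exactly what you verify; and when you claim all $E_{ij}+E_{ji}$, it is cleanest to say you apply the $45^\circ$ rotation in the $(k,l)$-plane to each $E_{kk}-E_{ll}$, all of which you already have, rather than propagating from a single off-diagonal element.
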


\begin{proof}
Let $X \in M_n(\mathbb R)$ be symmetric with trace zero, and let $V$ be the span of $\{UXU^t:U \in SO_n\}$.  We must show that $V$ is the space of all real symmetric matrices of trace zero.  

First we claim that $V$ contains all diagonal matrices of trace zero.  Indeed, since we may diagonalize $X$ by conjugating with an element of $SO_n$, $V$ contains some non-zero diagonal matrix of trace zero.  Now if $D = diag(d_1,d_2,\dotsc,d_n)$ is a diagonal matrix in $V$, then by conjugating $D$ by
\begin{equation*}
\begin{pmatrix}
     0 & -1 & 0\\
1 & 0 & 0\\
0 & 0 & I_{n-2}
    \end{pmatrix} \in SO_n
\end{equation*}
we have that $V$ also contains $diag(d_2,d_1,d_3,\dotsc,d_n)$.  By a similar argument we see that for any $1 \leq i,j \leq n$ the diagonal matrix obtained from $D$ by interchanging $d_i$ and $d_{j}$ lies in $V$.  Since $S_n$ is generated by transpositions, it follows that $V$ contains any diagonal matrix obtained by permuting the entries of $D$.  But it is well-known that this representation of $S_n$ on diagonal matrices of trace zero is irreducible, and hence $V$ contains all such diagonal matrices as claimed.

Now if $Y$ is any real symmetric matrix of trace zero, we can find a $U$ in $SO_n$ such that $UYU^{t}$ is a diagonal matrix of trace zero.  But we then have $UYU^t \in V$, and hence also $Y \in V$ as desired.
\end{proof}

\begin{proposition}\label{prop-max-connect}
The inclusion $\widetilde{SO_{n}}\subset U_{n}$ is maximal in the category of connected compact groups. 
\end{proposition}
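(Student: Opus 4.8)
The plan is to pass from groups to Lie algebras and then invoke Lemma \ref{soadj}. Let $G$ be a connected compact group with $\widetilde{SO}_n\subseteq G\subseteq U_n$. Being compact, $G$ is a closed subgroup of $U_n$, hence a connected compact Lie group, and its Lie algebra $\mathfrak g$ satisfies $\mathfrak{so}_n\oplus i\mathbb R\subseteq\mathfrak g\subseteq\mathfrak u_n$, where as in the text $i\mathbb R$ stands for the scalar anti-Hermitian matrices $i\lambda I_n$. Since $\widetilde{SO}_n$ and $G$ are connected Lie subgroups of $U_n$, and a connected Lie subgroup is determined by its Lie algebra (if $H_1\subseteq H_2$ are connected Lie subgroups with $\mathrm{Lie}(H_1)=\mathrm{Lie}(H_2)$ then $H_1=H_2$), it suffices to prove that there is no Lie subalgebra strictly between $\mathfrak{so}_n\oplus i\mathbb R$ and $\mathfrak u_n$. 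Indeed, this forces $\mathfrak g$ to equal one of the two, hence $G=\widetilde{SO}_n$ or $G=U_n$.

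To study the intermediate subalgebras, I would first record the decomposition $\mathfrak u_n=\mathfrak{so}_n\oplus\mathfrak p_0\oplus i\mathbb R I_n$, where $\mathfrak p_0=i\cdot\{\text{real symmetric matrices of trace zero}\}$: an anti-Hermitian matrix is uniquely the sum of a real antisymmetric matrix and $i$ times a real symmetric matrix, and the symmetric part splits off its scalar (trace) component. Each of the three summands is stable under the adjoint action of $\mathfrak{so}_n$; the summand $i\mathbb R I_n$ carries the trivial action, while $\mathfrak p_0$ is irreducible by Lemma \ref{soadj}, since the linear isomorphism $S\mapsto iS$ intertwines the conjugation action of $SO_n$ on symmetric traceless matrices with the adjoint action on $\mathfrak p_0$. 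A quick dimension count ($n(n-1)/2 + [n(n+1)/2-1] + 1 = n^2$) confirms this is a direct sum decomposition.

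Now suppose $\mathfrak{so}_n\oplus i\mathbb R\subseteq\mathfrak g\subseteq\mathfrak u_n$ is a Lie subalgebra. Since $\mathfrak{so}_n\subseteq\mathfrak g$ and $\mathfrak g$ is closed under the bracket, $\mathfrak g$ is stable under $\mathrm{ad}(\mathfrak{so}_n)$, so its image in $\mathfrak u_n/(\mathfrak{so}_n\oplus i\mathbb R I_n)\cong\mathfrak p_0$ is an $\mathfrak{so}_n$-submodule of $\mathfrak p_0$, hence is $0$ or all of $\mathfrak p_0$ by irreducibility; in the first case $\mathfrak g=\mathfrak{so}_n\oplus i\mathbb R$, and in the second $\dim\mathfrak g=\dim\mathfrak u_n$ so $\mathfrak g=\mathfrak u_n$. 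This finishes the proof. The substantive input is Lemma \ref{soadj}; beyond it, the only points needing care are the reduction from the group inclusion to the Lie algebra inclusion (using that a compact subgroup of $U_n$ is a closed, hence Lie, subgroup, and that connected Lie subgroups are detected by their Lie algebras) and the observation that the extra one-dimensional summand $i\mathbb R$ is harmless — it is already contained in every candidate $\mathfrak g$ and shares no constituent with $\mathfrak p_0$, so it plays no role in the module-theoretic step.
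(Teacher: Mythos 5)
Your proof is correct and follows essentially the same route as the paper: pass to Lie algebras, use that $\mathfrak u_n$ splits as $\mathfrak{so}_n\oplus i\{\text{symmetric traceless}\}\oplus i\mathbb R I_n$ under the adjoint $SO_n$-action, and invoke Lemma \ref{soadj} to force an intermediate $\mathfrak g$ to be one of the two extremes, then use connectedness to return to the group level. Your write-up just makes the module decomposition and the "connected subgroups are determined by their Lie algebras" step more explicit than the paper does.
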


\begin{proof}
Let $G$ be a connected compact group satisfying $\widetilde{SO}_n \subset G \subset U_n$.  Then $G$ is a Lie group, let $\mathfrak{g}$ denote its Lie algebra, which satisfies $\mathfrak{so}_n\oplus i\mathbb{R} \subset\mathfrak{g}\subset \mathfrak{u}_n$.

Let $ad_{G}$ be the action of $G$ on $\mathfrak{g}$ obtained by differentiating the adjoint action of $G$ on itself. 
This action turns $\mathfrak{g}$ into a $G$-module. Since $SO_n \subset G$, $\mathfrak{g}$ is also an $SO_n$-module.

Now if $G \neq \widetilde{SO}_n$, then since $G$ is connected we must have $\mathfrak{so}_n\oplus i\mathbb{R}\neq\mathfrak{g}$. 
It follows from the real vector space structure of the Lie algebras $\mathfrak{u}_n$ and $\mathfrak{so}_{n}$ that
there exists a non-zero symmetric real matrix of trace zero $X$ such that $iX\in \mathfrak{g}$.

But by Lemma \ref{soadj} the space of symmetric real matrices of trace zero is an irreducible representation of $SO_{n}$ under the adjoint action.  So $\mathfrak{g}$ must contain all such $X$, and hence $\mathfrak{g}=\mathfrak{u}_{n}$.  But since $U_n$ is connected, it follows that $G = U_n$.  
\end{proof}

Our aim is to extend this result to the category of compact groups.  To do this we need to compute the \textit{normalizer} of $\widetilde{SO}_n$ in $U_n$, i.e. the subgroup of $U_n$ consisting of unitary $U$ for which $U^{-1}XU \in \widetilde{SO}_n$ for all $X \in \widetilde{SO}_n$.  For this we need two lemmas.

\begin{lemma}\label{commutant}
 The commutant of $SO_n$ in $ M_n(\mathbb C)$, denoted $SO_n'$, is as follows:
 \begin{enumerate}
  \item $SO_2' = \{\begin{pmatrix}
                    \alpha & \beta \\
		    -\beta & \alpha
                   \end{pmatrix}, \ \alpha , \beta \in \mathbb C\}.$
\item If $n \geq 3$, $SO_n' = \{\alpha I_n, \alpha \in \mathbb C\}.$
 \end{enumerate}  
\end{lemma}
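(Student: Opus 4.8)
The plan is to treat the cases $n=2$ and $n\geq 3$ separately, in each case pinning down the commutant by testing $A\in M_n(\mathbb C)$ against a small number of explicit elements of $SO_n$. (One could alternatively invoke Schur's lemma together with the irreducibility of the complexified standard representation of $SO_n$ for $n\geq 3$, but the explicit computation below is in the spirit of the paper and covers $n=2$ uniformly.)

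For $n=2$, observe that $SO_2$ consists of the matrices $\cos\theta\, I_2+\sin\theta\, J$ with $J=\begin{pmatrix}0&-1\\1&0\end{pmatrix}$. Hence a matrix $A\in M_2(\mathbb C)$ commutes with all of $SO_2$ if and only if it commutes with $J$: one implication is clear since $J=R_{\pi/2}\in SO_2$, and the other holds because $SO_2$ lies in the real linear span of $I_2$ and $J$. Writing $A=\begin{pmatrix}\alpha&\beta\\\gamma&\delta\end{pmatrix}$, the identity $AJ=JA$ is an immediate $2\times 2$ computation and forces $\gamma=-\beta$ and $\delta=\alpha$; conversely every such matrix is a $\mathbb C$-combination of $I_2$ and $J$ and so commutes with $SO_2$. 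This is exactly statement~(1).

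For $n\geq 3$, I would first show that any $A=(a_{ij})$ commuting with $SO_n$ is diagonal. Given $i\neq j$, choose an index $k\notin\{i,j\}$ (possible precisely because $n\geq 3$) and let $D$ be the diagonal matrix with entries $-1$ in positions $i$ and $k$ and $+1$ elsewhere; then $\det D=1$, so $D\in SO_n$, and comparing the $(i,j)$-entries of $AD=DA$ gives $a_{ij}=-a_{ij}$, whence $a_{ij}=0$. Thus $A=\mathrm{diag}(d_1,\dots,d_n)$. Next, for $i\neq j$ let $R\in SO_n$ be the rotation by $\pi/2$ in the $(i,j)$-plane (and the identity on the remaining coordinates); comparing the $(j,i)$-entries of $AR=RA$ yields $d_i=d_j$. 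Hence all $d_i$ coincide and $A=\alpha I_n$, while conversely scalar matrices obviously lie in the commutant. This is statement~(2).

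The dichotomy between the two cases is essential rather than an artefact of the argument: $SO_2$ is abelian and its standard representation is not irreducible over $\mathbb C$, which is why its commutant is two-dimensional. Correspondingly, the only point requiring care when $n\geq 3$ is to ensure that the sign matrices used to kill the off-diagonal entries actually have determinant $1$, and this is exactly where the hypothesis $n\geq 3$ enters — there must be room for a second sign flip. No genuinely hard step is involved.
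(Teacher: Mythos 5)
Your proof is correct and follows essentially the same strategy as the paper: a direct computation for $n=2$, and for $n\geq 3$ first forcing diagonality by commuting with diagonal sign matrices having two entries $-1$ (determinant $1$), then equalizing the diagonal entries by commuting with elements mixing two coordinates. The only cosmetic difference is that in the second step you use a $\pi/2$ rotation in the $(i,j)$-plane where the paper uses the permutation matrix of a $3$-cycle; both work equally well.
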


\begin{proof}
 At $n=2$ this is a direct computation. For $n \geq 3$, an element in $X  \in SO_n'$ commutes with any diagonal matrix having exactly $n-2$  entries equal to $1$ and  two entries equal to $-1$. Hence
 $X$ is a diagonal matrix. Now since $X$ commutes with any even permutation matrix and $n \geq 3$,
 it commutes in particular with the permutation matrix associated with the cycle $(i,j,k)$ for any $1<i<j<k$, and hence  all 
 the entries of $X$ are the same: we conclude that $X$ is a scalar matrix. 
\end{proof}

\begin{lemma}\label{dense}
The set of matrices with  non-zero trace is dense in $SO_n$.  
\end{lemma}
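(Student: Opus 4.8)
The plan is to exploit the fact that the trace is a polynomial, hence real-analytic, function on the connected manifold $SO_n$. Concretely, consider $f\colon SO_n\to\mathbb R$ given by $f(U)=\operatorname{tr}(U)$, which is the restriction to $SO_n$ of a polynomial function on $M_n(\mathbb R)$; recall that $SO_n$ is a compact connected Lie group, in particular a connected real-analytic manifold. I would then invoke the standard fact that a real-analytic function on a connected real-analytic manifold either vanishes identically or has nowhere dense zero set. Since $f(I_n)=n\neq 0$, the second alternative holds, so the set $\{U\in SO_n:\operatorname{tr}(U)=0\}$ is nowhere dense in $SO_n$; its complement, the set of elements of $SO_n$ with non-zero trace, is therefore dense, which is the claim.

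An elementary alternative avoids analyticity by working in a normal form. Every $U\in SO_n$ is conjugate, within $SO_n$, to a block-diagonal matrix $D$ whose diagonal blocks are the $2\times 2$ rotations $R(\theta_1),\dots,R(\theta_k)$ together with a block $I_{n-2k}$, so that $\operatorname{tr}(U)=\operatorname{tr}(D)=2\sum_{j=1}^k\cos\theta_j+(n-2k)$. If $k=0$ then $U=I_n$ already has trace $n\neq 0$; otherwise, regarding the right-hand side as a function of $\theta_1$ alone it is non-constant, so every neighbourhood of $(\theta_1,\dots,\theta_k)$ contains a point at which it does not vanish. Perturbing $D$ to the corresponding block-diagonal matrix $D'$ and conjugating back by the same element of $SO_n$ yields $U'\in SO_n$ arbitrarily close to $U$ with $\operatorname{tr}(U')=\operatorname{tr}(D')\neq 0$, using that the trace and the operator norm are conjugation-invariant.

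The only point needing verification in either approach is that the trace is not identically zero on $SO_n$, which is immediate from $\operatorname{tr}(I_n)=n\geq 2$. I do not anticipate any genuine obstacle: this is a soft density statement, and the only real decision is which of the two frameworks above — real-analyticity of a polynomial on a connected manifold, or explicit perturbation in normal form — to present.
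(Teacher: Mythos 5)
Both of your arguments are correct. Your second, normal-form argument is essentially the paper's proof in a slightly different dress: the paper picks a single $T$-invariant $2$-plane $E$ with $T_{|E}\in SO(E)$, perturbs that rotation using the $n=2$ case (where the level sets of the trace in $SO_2$ are finite), and leaves $T_{|E^\perp}$ untouched, whereas you pass to the full canonical form with rotation blocks $R(\theta_1),\dots,R(\theta_k)$ and perturb one angle; the underlying mechanism (a small change of one planar rotation moves the trace off zero, while conjugation invariance of trace and norm transports this back to the original matrix) is the same, and your version just carries a bit more normal-form bookkeeping, including the observation that the conjugation can be taken inside $SO_n$ (true, since $-1$ eigenvalues come in pairs and sign flips only change $\theta_j$ to $-\theta_j$). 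Your first argument, via real-analyticity, is genuinely different from and softer than the paper's: since the trace is a polynomial, hence real-analytic, on the connected manifold $SO_n$ and is not identically zero (it equals $n$ at $I_n$), its zero set is nowhere dense, so the complement is dense. This buys brevity and generality (it shows at once that any nonvanishing polynomial condition cuts out a dense open set in a connected algebraic group), at the cost of invoking the standard but nontrivial fact about zero sets of real-analytic functions, while the paper's perturbation argument is completely elementary and self-contained.
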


\begin{proof}
 At $n=2$ this is clear since the set of elements in $SO_2$ having a given trace is finite.
 Assume that $n>2$ and let $T \in SO_n \simeq SO(\mathbb R^n)$ with $Tr(T)=0$. 
 Let $E\subset \mathbb R^n$ be a 2-dimensional subspace preserved by $T$ and such that
 $T_{|E} \in SO(E)$. Let $\epsilon >0$ and let $S_\epsilon \in SO(E)$ with
 $||T_{|E}-S_\epsilon|| <\epsilon$ and $Tr(T_{|E}) \not= Tr(S_\epsilon)$ ($n=2$ case).
 Now define $T_\epsilon \in SO(\mathbb R^n)=SO_n$ by $T_{\varepsilon | E} = S_\epsilon$ and
 $T_{\epsilon|E^\perp}=T_{|E^\perp}$. It is clear that 
 $||T-T_\epsilon|| \leq ||T_{|E}-S_\epsilon||<\epsilon$ and that
 $Tr(T_\epsilon)=Tr(S_\epsilon)+Tr(T_{|E^\perp}) \not = 0$. 
\end{proof}

\begin{proposition}\label{prop-normalize}
$\tilde O_n$ is the normalizer of $\widetilde{SO}_n$ in $U_n$.
\end{proposition}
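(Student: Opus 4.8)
The plan is to prove the two inclusions separately. The inclusion $\tilde O_n\subseteq N$, where $N$ denotes the normalizer of $\widetilde{SO}_n$ in $U_n$, is immediate: $\widetilde{SO}_n$ is generated by $SO_n$ and $\mathbb T\cdot I_n$, the latter subgroup is central in $U_n$, and $O_n$ normalizes $SO_n$, so every element of $\tilde O_n=\langle O_n,\mathbb T\cdot I_n\rangle$ normalizes $\widetilde{SO}_n$. For the reverse inclusion I would fix $U\in U_n$ with $U\widetilde{SO}_nU^{-1}=\widetilde{SO}_n$ and reduce the problem to showing that $\overline{U}^{-1}U$ is a scalar matrix. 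This reduction is exact: if $\overline{U}^{-1}U=\mu I_n$ with $\mu\in\mathbb T$, choosing $\lambda\in\mathbb T$ with $\lambda^2=\mu$ one checks directly that $V:=\overline{\lambda}\,U$ satisfies $\overline{V}=V$, and being also unitary it lies in $O_n$; hence $U=\lambda V\in\tilde O_n$. (In the other direction, $U=\lambda V$ with $V\in O_n$ forces $\overline{U}^{-1}U=\lambda^2 I_n$, so no weaker condition would do.)

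The first real step is to show that $\overline{U}^{-1}U$ lies in $SO_n'$, the commutant of $SO_n$ in $M_n(\mathbb C)$. Take $g\in SO_n$ with $Tr(g)\neq 0$ and write $UgU^{-1}=\lambda h$ with $\lambda\in\mathbb T$ and $h\in SO_n$ — possible because $\widetilde{SO}_n=(\mathbb T\cdot I_n)\,SO_n$. Comparing traces, $\lambda\,Tr(h)=Tr(g)$ is a nonzero real number, so $Tr(h)\neq 0$ and $\lambda=Tr(g)/Tr(h)\in\mathbb R$; thus $\lambda=\pm1$ and $UgU^{-1}=\pm h$ is a \emph{real} matrix. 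Taking entrywise conjugates and using $\overline{g}=g$ gives $\overline{U}g\,\overline{U}^{-1}=UgU^{-1}$, that is, $\overline{U}^{-1}U$ commutes with $g$. By Lemma \ref{dense} the elements $g\in SO_n$ with $Tr(g)\neq0$ are dense, so by continuity $\overline{U}^{-1}U$ commutes with all of $SO_n$, i.e. $\overline{U}^{-1}U\in SO_n'$; it is of course unitary.

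It remains to pass from $\overline{U}^{-1}U\in SO_n'$ to $\overline{U}^{-1}U\in\mathbb T I_n$. For $n\geq3$ this is immediate from Lemma \ref{commutant}(2), since $SO_n'\cap U_n=\mathbb C I_n\cap U_n=\mathbb T I_n$. The case $n=2$ is more delicate, as Lemma \ref{commutant}(1) only pins down $W:=\overline{U}^{-1}U$ up to the two-parameter family appearing there; to finish one combines the identity $\overline{W}=U^{-1}\overline{U}=W^{-1}$ (which is built into the definition of $W$) with unitarity, $W^{-1}=W^*=\overline{W}^t$, to get $\overline{W}=\overline{W}^t$, i.e. $W$ is symmetric — and a matrix of the form in Lemma \ref{commutant}(1) is symmetric only when its off-diagonal entries vanish, forcing $W\in\mathbb T I_2$. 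In all cases $\overline{U}^{-1}U\in\mathbb T I_n$, hence $U\in\tilde O_n$, completing the proof. I expect the main obstacle to be exactly this last paragraph: the trace comparison only makes $UgU^{-1}$ real for $g$ of nonzero trace, so Lemmas \ref{dense} and \ref{commutant} are carrying the load, and $n=2$ genuinely needs the extra structural identity for $W$ because there the commutant of $SO_2$ is strictly larger than the scalars.
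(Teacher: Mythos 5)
Your proof is correct and follows essentially the same route as the paper: since $U$ is unitary your matrix $\overline{U}^{-1}U$ is exactly the paper's $U^tU$, and both arguments run through the trace comparison, the density of non-zero-trace elements (Lemma \ref{dense}), the commutant computation (Lemma \ref{commutant}) with the same symmetry trick to handle $n=2$, and the extraction of a square root of the scalar to conclude $U\in\tilde O_n$. The only cosmetic difference is that you apply density directly to the commutation relation rather than first proving that $U$ normalizes $SO_n$, which changes nothing of substance.
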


\begin{proof}
It is clear that $\tilde O_n$ normalizes $\widetilde{SO}_n$, so we must show that if $U \in U_n$ normalizes $\widetilde{SO}_n$ then $U \in \tilde O_n$.  First note that $U$ normalizes $SO_n$. Indeed if $X \in SO_n$ then $U^{-1}XU \in \widetilde {SO}_n$, so 
$U^{-1}XU= \lambda Y$ for $\lambda \in \mathbb T$ and $Y \in SO_n$. If $Tr(X) \not=0$, we have $\lambda \in \mathbb R$ and hence $\lambda Y = U^{-1}XU \in SO_n$.
The set of matrices having non-zero trace is dense in $SO_n$ by Lemma \ref{dense}, so since $SO_n$ is closed
and the matrix operations are continous, we conclude that $U^{-1}XU \in SO_n$ for all $X \in SO_n$.

Thus for any $X \in SO_n$, we have
$(UXU^{-1})^t(UXU^{-1})= I_n$ and hence $X^tU^tUX= U^tU$. This means that $U^tU \in SO_n'$. 
Hence if $n\geq 3$, we have $U^tU=  \alpha I_n$ by Lemma \ref{commutant}, with $\alpha \in \mathbb T$ since $U$ is unitary. Hence we have $U = \alpha^{1/2} (\alpha^{-1/2} U)$ with $\alpha^{-1/2} U \in O_n$, and $U \in \widetilde{O_n}$. If $n=2$, Lemma \ref{commutant} combined with the fact that 
$(U^tU)^t=U^tU$ gives again that $U^tU=  \alpha I_2$, and we conclude as in the previous case.
\end{proof}

We can now extend Proposition \ref{prop-max-connect} as follows.

\begin{proposition}\label{prop-class-max}
The inclusion $\tilde O_n \subset U_n$ is maximal in the category of compact groups.
\end{proposition}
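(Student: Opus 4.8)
The plan is to reduce to the two previous propositions by passing to the connected component of the identity. Let $G$ be a compact group with $\tilde O_n \subset G \subset U_n$, and let $G^0$ denote the connected component of the identity in $G$.

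First I would record that $\widetilde{SO}_n$ is connected. Indeed, since $\mathbb T\cdot I_n$ is central in $U_n$, the product set $\{\lambda X : X\in SO_n,\ \lambda\in\mathbb T\}$ is already a subgroup containing both $SO_n$ and $\mathbb T\cdot I_n$, so it equals $\widetilde{SO}_n$; being the image of the connected set $SO_n\times\mathbb T$ under the multiplication map $(X,\lambda)\mapsto \lambda X$, it is connected. (This is exactly the fact, already observed in the text, that $\widetilde{SO}_n$ is the identity component of $\tilde O_n$.) Consequently $\widetilde{SO}_n\subset G^0$, and $G^0$ is a connected compact group with $\widetilde{SO}_n\subset G^0\subset U_n$, so Proposition \ref{prop-max-connect} applies and yields two cases: either $G^0=\widetilde{SO}_n$ or $G^0=U_n$.

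In the second case $U_n=G^0\subset G\subset U_n$ forces $G=U_n$, and we are done. In the first case I would invoke the general fact that the identity component of a topological group is a normal subgroup: thus $\widetilde{SO}_n=G^0\trianglelefteq G$, so every element of $G$ normalizes $\widetilde{SO}_n$. By Proposition \ref{prop-normalize} the normalizer of $\widetilde{SO}_n$ in $U_n$ is precisely $\tilde O_n$, whence $G\subset\tilde O_n$; combined with the standing hypothesis $\tilde O_n\subset G$ this gives $G=\tilde O_n$.

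Since the substantive work has all been carried out in Lemma \ref{soadj} and Propositions \ref{prop-max-connect}, \ref{prop-normalize}, there is essentially no obstacle left to overcome here; the only point that needs a moment's care is the passage from $G$ to $G^0$ — namely checking that $\widetilde{SO}_n$ is connected, so that the connected-category maximality of Proposition \ref{prop-max-connect} can legitimately be applied to $G^0$ — and this has already been noted above.
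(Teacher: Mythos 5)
Your proof is correct and follows essentially the same route as the paper: pass to the identity component $G^0$, apply Proposition \ref{prop-max-connect} to get $G^0=\widetilde{SO}_n$ (the case $G^0=U_n$ forcing $G=U_n$), and then use normality of $G^0$ in $G$ together with Proposition \ref{prop-normalize} to conclude $G\subset\tilde O_n$. The only difference is that you spell out the connectedness of $\widetilde{SO}_n$, which the paper treats as an already-noted fact.
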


\begin{proof}
Suppose that $\tilde O_n \subset G \subset U_n$ is a compact group such that $G \neq U_n$.  It is a well known fact that the connected component of the identity in $G$ is a normal subgroup, denoted $G_0$.  Since we have $\widetilde{SO}_n \subset G_0 \subset U_n$, by Proposition \ref{prop-max-connect} we must have $G_0 = \widetilde{SO}_n$.  But since $G_0$ is normal in $G$, $G$ normalizes $\widetilde{SO}_n$ and hence $G \subset \tilde O_n$ by Proposition \ref{prop-normalize}.
\end{proof}

We are now ready to state and prove the main result in this section.

\begin{theorem}\label{thm-proj-max}
The inclusion $PO_n\subset PU_n$ is maximal in the category of compact groups.
\end{theorem}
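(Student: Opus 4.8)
The plan is to reduce the statement to Proposition \ref{prop-class-max} by the standard lattice isomorphism between closed subgroups of a quotient group and closed subgroups of the ambient group that contain the kernel. Write $\pi\colon U_n\twoheadrightarrow PU_n=U_n/\mathbb T$ for the canonical projection, whose kernel is the central circle $\mathbb T=\mathbb T\cdot I_n$. By the very definition of $\tilde O_n$ recalled above we have $\tilde O_n=\pi^{-1}(PO_n)$, and trivially $U_n=\pi^{-1}(PU_n)$.

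First I would take an arbitrary compact group $G$ with $PO_n\subset G\subset PU_n$. Being a compact subgroup of the compact group $PU_n$, it is closed, so its preimage $\tilde G:=\pi^{-1}(G)$ is a closed, hence compact, subgroup of $U_n$; moreover $\tilde G$ contains $\ker\pi=\mathbb T$. Applying $\pi^{-1}$ to the given inclusions yields $\tilde O_n\subset\tilde G\subset U_n$, an instance of the configuration covered by Proposition \ref{prop-class-max}. That proposition then forces $\tilde G=\tilde O_n$ or $\tilde G=U_n$. Since $\pi$ is surjective we have $\pi(\tilde G)=\pi(\pi^{-1}(G))=G$, so in the first case $G=\pi(\tilde O_n)=PO_n$ and in the second $G=\pi(U_n)=PU_n$; this is exactly the asserted maximality.

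I do not expect a genuine obstacle in this last step — the substance of the theorem has already been spent in Proposition \ref{prop-class-max} (and, through it, in Lemmas \ref{soadj}, \ref{commutant} and \ref{dense}). The only points requiring a word of care are that a compact subgroup is automatically closed (so that $\pi^{-1}(G)$ is again an object of the category at hand), that $\pi^{-1}(G)\supset\mathbb T$ is what makes $\pi(\pi^{-1}(G))=G$ rather than a proper subgroup, and that $\tilde O_n=\pi^{-1}(PO_n)$, exactly as was observed when $\tilde O_n$ was introduced.
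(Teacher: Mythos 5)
Your argument is correct and is essentially the paper's own proof: the paper likewise deduces the theorem from Proposition \ref{prop-class-max} by pulling an intermediate group $PO_n\subset G\subset PU_n$ back along the quotient map $U_n\twoheadrightarrow PU_n$ to get an intermediate group $\tilde O_n\subset\pi^{-1}(G)\subset U_n$. Your version merely makes explicit the routine points (closedness of $G$, $\pi^{-1}(G)\supset\mathbb T$, and $\pi(\pi^{-1}(G))=G$) that the paper leaves to the reader.
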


\begin{proof}
It follows directly from the observation that the maximality of $\tilde O_{n}$ in $U_{n}$ 
implies the maximality of $PO_n$ in  $PU_n$.  Indeed, if $PO_n \subset G \subset PU_n$ were an intermediate subgroup, then its preimage under the quotient map $U_n \twoheadrightarrow PU_n$ would be an intermediate subgroup of $\tilde O_{n}\subset U_{n}$, contradicting Proposition \ref{prop-class-max}.
\end{proof}

\section{A short five lemma}

In this section we prove a short five lemma for cosemisimple Hopf algebras (Theorem 3.4 below), which is a result having its own interest, to be used in Section 4 below. 

\begin{definition}
A sequence of Hopf algebra maps
$$\mathbb C \to B\overset{i}\to A\overset{p}\to L\to\mathbb C$$
is called pre-exact if $i$ is injective, $p$ is surjective and $i(A)=H^{cop}$, where:
$$A^{cop}=\{a\in A|(id\otimes p)\Delta(a)=a\otimes 1\}$$
\end{definition}

The example that we are interested in is as follows.

\begin{proposition}
Let $A$ be an orthogonal Hopf algebra with  generators $u_{ij}$. Assume that we have surjective Hopf algebra map $p:A\to\mathbb C\mathbb Z_2$, $u_{ij}\to\delta_{ij}g$, where $<g>=\mathbb Z_2$. Let $PA$ be the projective version of $A$, i.e. the subalgebra generated by the elements $u_{ij}u_{kl}$ with the inclusion $i:PA \subset A$. Then the sequence
$$\mathbb C\to PA\overset{i'}\to A\overset{p}\to\mathbb C\mathbb Z_2 \to
\mathbb C$$
is pre-exact.
\end{proposition}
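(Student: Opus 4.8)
The plan is to verify directly the three defining conditions of pre-exactness, the only substantial point being the identification $i'(PA)=A^{cop}$. Injectivity of $i'$ is automatic, since it is the inclusion $PA\subset A$. Surjectivity of $p$ is immediate as well, since $p(1)=1$ and $p(u_{11})=g$ together generate $\mathbb C\mathbb Z_2$ (which has $\{1,g\}$ as a linear basis). So everything reduces to the equality $PA=A^{cop}$, where $A^{cop}=\{a\in A\mid(\mathrm{id}\otimes p)\Delta(a)=a\otimes 1\}$, and the strategy is to identify both sides with the ``even part'' of $A$ for a natural $\mathbb Z_2$-grading.

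First I would introduce the algebra map $\rho=(\mathrm{id}\otimes p)\circ\Delta\colon A\to A\otimes\mathbb C\mathbb Z_2$, a composite of algebra maps, and evaluate it on generators:
$$\rho(u_{ij})=\sum_{k}u_{ik}\otimes p(u_{kj})=\sum_k u_{ik}\otimes\delta_{kj}g=u_{ij}\otimes g.$$
Since the $u_{ij}$ are self-adjoint they generate $A$ as an algebra, so $A$ is linearly spanned by the words $w=u_{i_1j_1}\cdots u_{i_\ell j_\ell}$, and multiplicativity of $\rho$ gives $\rho(w)=w\otimes g^{\ell}$. Writing $M_0$ (resp. $M_1$) for the linear span of the words of even (resp. odd) length, we get $A=M_0+M_1$ with $\rho(m_0)=m_0\otimes 1$ for $m_0\in M_0$ and $\rho(m_1)=m_1\otimes g$ for $m_1\in M_1$.

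Next I would read off $A^{cop}$. The inclusion $M_0\subseteq A^{cop}$ is immediate from $\rho(m_0)=m_0\otimes 1$. Conversely, given $a\in A^{cop}$, write $a=a_0+a_1$ with $a_\varepsilon\in M_\varepsilon$; then $a\otimes 1=\rho(a)=a_0\otimes 1+a_1\otimes g$ inside $A\otimes\mathbb C\mathbb Z_2=(A\otimes 1)\oplus(A\otimes g)$, and comparing the two summands forces $a_1=0$, i.e. $a=a_0\in M_0$. Hence $A^{cop}=M_0$. Finally, the subalgebra $PA$ generated by the products $u_{ij}u_{kl}$ is exactly $M_0$: it is contained in $M_0$ since its generators are words of length two, and it contains $M_0$ since $1\in PA$ and a word of length $2m$ factors as a product of $m$ such generators. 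Therefore $i'(PA)=M_0=A^{cop}$, which is the required pre-exactness.

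The computation is essentially routine; the one point deserving care is the appeal to the decomposition $A\otimes\mathbb C\mathbb Z_2=(A\otimes 1)\oplus(A\otimes g)$ — that is, to the linear independence of $1$ and $g$ in $\mathbb C\mathbb Z_2$ — together with the observation that $A$, being generated as a $*$-algebra by the self-adjoint elements $u_{ij}$, is spanned by words in the $u_{ij}$, so that the $\mathbb Z_2$-grading by parity of word-length is well defined and $\rho$ is the corresponding coaction. Once this grading is in place there is nothing left to do, since both $PA$ and $A^{cop}$ are manifestly its degree-zero component.
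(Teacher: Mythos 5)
Your argument is correct and is essentially the paper's own proof: both compute $(\mathrm{id}\otimes p)\Delta$ on monomials in the $u_{ij}$, obtain the parity grading by word length, and identify $A^{cop}$ with the span of even-length words, which equals $PA$. You merely spell out the routine details (injectivity of $i'$, surjectivity of $p$, linear independence of $1$ and $g$) that the paper leaves implicit.
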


\begin{proof}
We have:
$$(id \otimes p)\Delta(u_{i_1j_1}\ldots u_{i_mj_m})= 
\begin{cases}
u_{i_1j_1}\ldots u_{i_mj_m}\otimes 1&\text{if $m$ is even} \\
u_{i_1j_1}\ldots u_{i_mj_m}\otimes g&\text{if $m$ is odd}
\end{cases}$$ 
Thus $H^{cop}$ is the span of monomials of even length, which is clearly $PH$.
\end{proof}

A pre-exact sequence as in Definition 3.1 is said to be exact \cite{ade} if in addition we have $i(A)^+H=\ker(\pi)=Hi(A)^+$, where $i(A)^+=i(A)\cap\ker(\varepsilon)$. The pre-exact sequence in Proposition 3.2 is actually exact, but we only need its pre-exactness in what follows.

In order to prove the short five lemma, we use the following well-known result. We give a proof for the sake of completness.

\begin{lemma}\label{integral-injective}
Let $\theta:A\to A'$ be a Hopf algebra morphism with $A,A'$ cosemisimple and let
 $h_A,h_{A'}$ be the respective Haar integrals of $A,A'$. Then $\theta$ is injective iff
 $h_{A'}\theta=h_A$.
\end{lemma}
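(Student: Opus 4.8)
The plan is to prove both implications separately. For the easy direction, suppose $\theta$ is injective. Recall that cosemisimplicity of a Hopf algebra $A$ is equivalent to the existence of a normalized left-and-right invariant Haar integral $h_A : A \to \mathbb C$, which is characterized by the property that $(h_A \otimes \mathrm{id})\Delta(a) = h_A(a)1 = (\mathrm{id}\otimes h_A)\Delta(a)$ for all $a \in A$, together with $h_A(1) = 1$. The functional $h_{A'}\theta : A \to \mathbb C$ is again normalized, since $\theta(1) = 1$; and it is invariant because $\theta$ is a coalgebra morphism, i.e.\ $(\theta\otimes\theta)\Delta_A = \Delta_{A'}\theta$, so applying $\mathrm{id}\otimes h_{A'}$ to this identity and using invariance of $h_{A'}$ on $A'$ gives $(\mathrm{id}\otimes h_{A'}\theta)\Delta_A(a) = (\theta^{-1}\otimes\mathrm{id})\big((\mathrm{id}\otimes h_{A'})\Delta_{A'}(\theta a)\big)$, hmm, this needs care since $\theta^{-1}$ is only defined on the image. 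Better: the composite $h_{A'}\theta$ is a normalized invariant functional on $A$; by uniqueness of the Haar integral on the cosemisimple Hopf algebra $A$ (which does not require injectivity of $\theta$, only that $h_{A'}\theta$ is invariant on $A$), we conclude $h_{A'}\theta = h_A$. Note that injectivity of $\theta$ is not even used here — so in fact the forward implication holds without hypothesis, and the content is entirely in the converse.

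For the converse, suppose $h_{A'}\theta = h_A$; I want to show $\theta$ is injective. The key tool is the fact that on a cosemisimple Hopf algebra the Haar integral is \emph{faithful} in the following sense: the bilinear form $\langle a, b\rangle = h_A(ab)$ need not be nondegenerate in general, but there is a cleaner statement available from Peter--Weyl theory for cosemisimple Hopf algebras. Namely, $A$ decomposes as a direct sum $A = \bigoplus_{\pi} C_\pi$ of its ``coefficient coalgebras'' $C_\pi$ indexed by isomorphism classes of irreducible comodules $\pi$, and $h_A$ restricted to the matrix coefficients of each $\pi$ is given, up to the quantum dimension factors, by a genuine nondegenerate pairing of $C_\pi$ with $C_{\bar\pi}$. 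I would use instead the most elementary route: the GNS-type argument. Define $N = \ker\theta$, a Hopf ideal of $A$. For any $a \in N$, I claim $h_A(b^*a) = 0$ for all $b \in A$ — wait, $A$ here is just a Hopf algebra, not necessarily a $*$-algebra; but in our application it is orthogonal, so it is a Hopf $*$-algebra and $h_A$ is a state. In the general cosemisimple (not necessarily $*$) setting the right statement uses the antipode: for $a \in N$ and any $b$, we have $h_A(ab) = h_{A'}(\theta(a)\theta(b)) = h_{A'}(0) = 0$, so $N$ is contained in the radical of the form $(a,b)\mapsto h_A(ab)$. Thus it suffices to show this form is nondegenerate, i.e.\ $h_A$ is a \emph{faithful} functional, for a cosemisimple Hopf algebra.

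This last point — faithfulness of the Haar integral on a cosemisimple Hopf algebra — is the main obstacle, and I expect it to be handled by Peter--Weyl: writing $a = \sum_\pi a_\pi$ with $a_\pi \in C_\pi$ nonzero for some $\pi$, one picks $b$ supported on $C_{\bar\pi}$ such that $h_A(a_\pi b) \neq 0$, using the explicit formula $h_A(u^\pi_{ij} S(u^\pi_{kl})) = \frac{1}{\dim_q \pi}(\text{something})\delta$ — more precisely the orthogonality relations for matrix coefficients of a cosemisimple Hopf algebra, which assert that the matrix coefficients of non-isomorphic irreducibles are $h_A$-orthogonal and that within a single $C_\pi$ the pairing $h_A(u^\pi_{ij}\,?)$ against $C_{\bar\pi}$ is nondegenerate. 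Since the orthogonality relations are exactly what cosemisimplicity provides, this closes the argument. I would present the proof in this order: (1) recall the characterization of $h_A$ by normalization and invariance and its uniqueness; (2) deduce the forward implication immediately (indeed unconditionally); (3) for the converse, observe $\ker\theta$ lies in the radical of $a,b\mapsto h_A(ab)$; (4) invoke the orthogonality relations / Peter--Weyl decomposition to conclude that radical is zero, hence $\theta$ is injective. The only real work is step (4), and in a paper of this type it is legitimate to cite it as ``well-known'' (as the lemma statement already flags) with a one-line indication via the Peter--Weyl decomposition.
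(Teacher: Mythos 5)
Your converse direction is essentially the paper's own argument and is fine: if $h_{A'}\theta=h_A$ and $\theta(a)=0$, then $h_A(ab)=h_{A'}(\theta(a)\theta(b))=0$ for all $b$, and the orthogonality relations (Peter--Weyl) for the cosemisimple Hopf algebra $A$ give $a=0$. Citing the nondegeneracy of the form $(a,b)\mapsto h_A(ab)$ as well known is exactly what the paper does.

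The forward direction, however, contains a genuine error. You assert that $h_{A'}\theta$ is an invariant functional on $A$ for an \emph{arbitrary} Hopf algebra morphism $\theta$, and conclude that ``the forward implication holds without hypothesis.'' It does not. Applying $\mathrm{id}\otimes h_{A'}$ to $(\theta\otimes\theta)\Delta_A=\Delta_{A'}\theta$ only yields
\[
\theta\bigl(a_1\,h_{A'}(\theta(a_2))\bigr)=h_{A'}(\theta(a)_1)\,\theta(a)_2=h_{A'}(\theta(a))\,1_{A'},
\]
i.e.\ invariance of $h_{A'}\theta$ holds \emph{after applying} $\theta$; to strip off the $\theta$ and get $a_1h_{A'}(\theta(a_2))=h_{A'}(\theta(a))1_A$ you need precisely the injectivity of $\theta$ --- this is exactly the computation in the paper, and it is the point you yourself flagged (``$\theta^{-1}$ is only defined on the image'') before dismissing it. Without injectivity the conclusion $h_{A'}\theta=h_A$ is false in general: take $\theta=\varepsilon:A\to\mathbb C$ with $A$ nontrivial cosemisimple (then $h_{\mathbb C}\circ\varepsilon=\varepsilon\neq h_A$), or the surjection $p:A\to\mathbb C\mathbb Z_2$ of Proposition 3.2, where $h_{\mathbb C\mathbb Z_2}\circ p$ is not the Haar integral of $A$. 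Your claim is also internally inconsistent: an unconditional forward implication combined with your (correct) converse would prove that \emph{every} Hopf algebra morphism between cosemisimple Hopf algebras is injective, which is absurd --- the paper itself applies the lemma to surjections that are not injective. The repair is simply the paper's one-line argument: from the displayed identity and injectivity of $\theta$, deduce that $h_{A'}\theta$ is a normalized invariant functional on $A$, hence equals $h_A$ by uniqueness of the Haar integral.
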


\begin{proof}
For $a \in A$, we have:
$$\theta (h_{A'}(\theta(a_1))a_2)=h_{A'}(\theta(a)_1)\theta(a)_2=\theta(h_{A'}\theta(a)1)$$

Thus if $\theta$ is injective then $h_{A'}\theta$ is a Haar integral on $A$, and the result follows from the uniqueness of the Haar integral.
 
Conversely, assume that $h_A=h_{A'}\theta$. Then for all $a,b \in A$, we have $h_A(xy)=h_{A'}(\theta(a)\theta(b))$, so if $\theta(a)=0$, we have $h_A(ab)=0$ for all $b\in H$. It follows from the orthogonality relations that $a=0$, and hence $\theta$ is injective.
\end{proof}

\begin{theorem}\label{short}
Consider a commutative diagram of cosemisimple Hopf algebras
$$\begin{CD}
k@>>>B@>{i}>>A@>{\pi}>>L@>>>k\\
@.@|@VV{\theta}V@|@.\\
k@>>>B@>{i'}>>A'@>{\pi'}>>L@>>>k
\end{CD}$$
where the rows are pre-exact. Then $\theta$ is injective.
\end{theorem}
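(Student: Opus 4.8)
The plan is to reduce injectivity of $\theta$ to an equality of Haar integrals via Lemma~\ref{integral-injective}: writing $h_A,h_{A'},h_B,h_L$ for the Haar integrals (which exist and are unique by cosemisimplicity), it suffices to prove $h_{A'}\circ\theta=h_A$. Two elementary observations set up the argument. First, the restriction of a Haar integral to a Hopf subalgebra is again a Haar integral, so, $i$ and $i'$ being injective Hopf algebra maps, uniqueness gives $h_A\circ i=h_B=h_{A'}\circ i'$; since $\theta\circ i=i'$, this already shows that the functionals $h_{A'}\circ\theta$ and $h_A$ agree on $i(B)$. Second, introduce the ``fibre integration'' map $E\colon A\to A$, $E(a)=a_{(1)}\,h_L(\pi(a_{(2)}))$, and analogously $E'\colon A'\to A'$ built from $\pi'$. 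Using only the invariance of $h_L$ one checks that $(\mathrm{id}\otimes\pi)\Delta(E(a))=E(a)\otimes 1$, i.e. $E$ takes values in $A^{co\,\pi}$; and here pre-exactness enters: $A^{co\,\pi}=i(B)$, so in fact $E(A)\subseteq i(B)$, and likewise $E'(A')\subseteq i'(B)$.

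Next I would record the two identities that make the machine run. Since $h_A$ is invariant and $h_L(1)=1$, we get $h_A(E(a))=(h_L\circ\pi)\big(h_A(a_{(1)})a_{(2)}\big)=(h_L\circ\pi)\big(h_A(a)1_A\big)=h_A(a)$, that is, $h_A\circ E=h_A$; the same computation gives $h_{A'}\circ E'=h_{A'}$. On the other hand, since $\theta$ is a coalgebra map with $\pi'\circ\theta=\pi$, one has $E'(\theta(a))=\theta(a_{(1)})\,h_L(\pi(a_{(2)}))=\theta(E(a))$, i.e. $E'\circ\theta=\theta\circ E$.

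Combining everything, for $a\in A$ we would get
$$h_{A'}(\theta(a))=h_{A'}(E'(\theta(a)))=h_{A'}(\theta(E(a)))=h_A(E(a))=h_A(a),$$
where the outer equalities are $h_{A'}=h_{A'}\circ E'$ and $h_A=h_A\circ E$, the middle one is $E'\circ\theta=\theta\circ E$, and the remaining one uses that $E(a)\in i(B)$ together with the first observation (agreement of $h_{A'}\circ\theta$ and $h_A$ on $i(B)$). Hence $h_{A'}\circ\theta=h_A$, and Lemma~\ref{integral-injective} yields that $\theta$ is injective. The one genuinely non-formal point is the second observation: producing $E$ and recognizing, via pre-exactness, that it lands inside $i(B)$; everything else is a short manipulation of Haar integrals.
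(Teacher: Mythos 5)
Your proof is correct. It shares with the paper the same overall strategy — everything is reduced, via Lemma~\ref{integral-injective}, to proving $h_{A'}\circ\theta=h_A$ — but the way you establish this identity is genuinely different in its mechanics. The paper views $A$ and $A'$ as right $L$-comodules via $(\mathrm{id}\otimes\pi)\Delta$, invokes the Peter--Weyl decomposition $L=\bigoplus_\lambda L(\lambda)$ to split $A=\bigoplus_\lambda A_\lambda$ into isotypic components, and then checks three facts: $A_1=A^{co\,\pi}=i(B)$, the Haar integral vanishes on $A_\lambda$ for $\lambda\neq 1$, and $\theta(A_\lambda)\subseteq A'_\lambda$. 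You instead work with the single averaging operator $E(a)=a_{(1)}h_L(\pi(a_{(2)}))$, and your three identities $E(A)\subseteq i(B)$, $h_A\circ E=h_A$, and $E'\circ\theta=\theta\circ E$ are precisely the operator-level counterparts of those three facts (indeed $E$ is nothing but the projection of $A$ onto the trivial isotypic component $A_1$ killing all the others). What your route buys is economy of means: you never need the Peter--Weyl decomposition of $L$ or the language of isotypic components, only the existence, invariance and normalization of $h_L$, plus the elementary observation (itself a special case of the easy direction of Lemma~\ref{integral-injective}) that $h_A\circ i=h_B=h_{A'}\circ i'$; pre-exactness enters exactly once, to identify the image of $E$ with $i(B)$, which is also where it enters in the paper. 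The paper's version, in exchange, makes the underlying representation-theoretic structure explicit. Both arguments use cosemisimplicity of $A$, $A'$ through Lemma~\ref{integral-injective} and cosemisimplicity of $L$ (and, in your reduction to $i(B)$, of $B$) through the Haar integral, so the hypotheses are used identically.
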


\begin{proof}
We have to show that $h_A=h_{A'}\theta$, where $h_A,h_{A'}$ are the respective Haar integrals of $A,A'$. Let $\Lambda$ be the set of isomorphism classes of simple $L$-comodules and consider the Peter-Weyl decomposition of $L$:
$$L=\bigoplus_{\lambda\in\Lambda}L({\lambda})$$

We view $A$ as a right $L$-comodule via $(id\otimes\pi)\Delta$. Then $A$ has a decomposition into isotypic components as follows, where $A_\lambda =\{a \in A \ | \ (id \otimes \pi)\circ \Delta(a) \in A \otimes L(\lambda)\}$: 
$$A=\bigoplus_{\lambda\in\Lambda}A_\lambda$$ 

It is clear that $A_1=A^{co\pi}$. Then if $\lambda\neq 1$, we have $h_A(A_\lambda)=0$. Indeed for $a\in A_\lambda$,
we have: 
$$a_1\otimes\pi(a_2)\in H\otimes L(\lambda)
\implies h_A(a)1=\pi(h_H(a_1)a_2)\in L(\lambda)
\implies h_H(a)=0$$ 

Since $\pi' \theta=\pi$, it is easy to see that $\theta(A_\lambda)\subset A'_\lambda$
and hence for $\lambda\neq 1$, $h_{A'|A'_\lambda}=h_{A'}\theta_{|A_\lambda}=0=h_{A| A_\lambda}$. For $\lambda=1$, we have $i(A)=A_1$ and $\theta$ is injective on $i(A)$ since $\theta i=i'$. Hence by  Lemma \ref{integral-injective} we have $h_{A'}\theta_{|A_1}=h_{A_1}= h_{A|A_1}$. Since $A=\oplus_{\lambda\in\Lambda}A_\lambda$ we conclude
$h_A=h_{A'}\theta$ and by Lemma \ref{integral-injective} we get that $\theta$ is injective.
\end{proof}

\section{The main result}

We have now all the ingredients for stating and proving our main result in this paper.

\begin{theorem}\label{main}
The inclusion $O_n \subset O_n^*$ is maximal in the category of compact quantum groups.
\end{theorem}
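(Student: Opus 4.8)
The plan is to reduce the statement to the classical maximality result of Theorem~\ref{thm-proj-max} by passing to the projective version, and to transfer the conclusion back up by means of the short five lemma, Theorem~\ref{short}. Let $G$ be a compact quantum group with $O_n\subset G\subset O_n^*$, and put $A=\mathcal R(G)$, so that we are given surjective Hopf $*$-algebra maps $\varphi:\mathcal R(O_n^*)\twoheadrightarrow A$ and $\psi:A\twoheadrightarrow\mathcal R(O_n)$, both acting by $u_{ij}\mapsto u_{ij}$. What has to be shown is that one of $\varphi,\psi$ is an isomorphism, i.e. that $G$ equals $O_n^*$ or $O_n$.

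First I would go to the projective versions. As $\varphi$ and $\psi$ fix the generators, they restrict to surjections $P\mathcal R(O_n^*)\twoheadrightarrow PA\twoheadrightarrow P\mathcal R(O_n)$, i.e. to inclusions $PO_n\subset PG\subset PO_n^*$. By the isomorphism $PO_n^*\simeq PU_n$, the algebra $P\mathcal R(O_n^*)$ is commutative, hence so is its quotient $PA$; thus $PG$ is a genuine compact group with $PO_n\subset PG\subset PU_n$, and Theorem~\ref{thm-proj-max} leaves exactly two possibilities: either $\varphi$ restricts to an isomorphism $P\mathcal R(O_n^*)\cong PA$ (the case $PG=PU_n$), or $\psi$ restricts to an isomorphism $PA\cong P\mathcal R(O_n)$ (the case $PG=PO_n$).

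Then I would feed each case into the five lemma. For each $H\in\{O_n^*,G,O_n\}$ there is a surjective Hopf algebra map $p:\mathcal R(H)\to\mathbb C\mathbb Z_2$, $u_{ij}\mapsto\delta_{ij}g$; it is well defined because it factors as the given quotient $\mathcal R(H)\twoheadrightarrow\mathcal R(O_n)$ followed by the Hopf algebra surjection $\mathcal R(O_n)\to\mathbb C\mathbb Z_2$ dual to the inclusion $\{\pm I_n\}\subset O_n$. By Proposition~3.2 this yields a pre-exact sequence $\mathbb C\to P\mathcal R(H)\to\mathcal R(H)\overset{p}{\to}\mathbb C\mathbb Z_2\to\mathbb C$, and all the Hopf algebras occurring here ($\mathcal R(O_n^*)$, $A$, $\mathcal R(O_n)$, their projective subalgebras, $\mathbb C\mathbb Z_2$) are cosemisimple, being representative-function algebras of compact quantum groups. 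Since $\varphi$ and $\psi$ intertwine these sign characters, in the first case, identifying $PA$ with $P\mathcal R(O_n^*)$ via $\varphi$, we obtain a commutative diagram
$$\begin{CD}
\mathbb C@>>>P\mathcal R(O_n^*)@>>>\mathcal R(O_n^*)@>{p}>>\mathbb C\mathbb Z_2@>>>\mathbb C\\
@.@|@VV{\varphi}V@|@.\\
\mathbb C@>>>P\mathcal R(O_n^*)@>>>\mathcal R(G)@>{p}>>\mathbb C\mathbb Z_2@>>>\mathbb C
\end{CD}$$
with pre-exact rows of cosemisimple Hopf algebras, whence $\varphi$ is injective by Theorem~\ref{short}, hence an isomorphism, i.e. $G=O_n^*$. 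In the second case the symmetric argument, applied after identifying $PA$ with $P\mathcal R(O_n)$ via $\psi$ to
$$\begin{CD}
\mathbb C@>>>P\mathcal R(O_n)@>>>\mathcal R(G)@>{p}>>\mathbb C\mathbb Z_2@>>>\mathbb C\\
@.@|@VV{\psi}V@|@.\\
\mathbb C@>>>P\mathcal R(O_n)@>>>\mathcal R(O_n)@>{p}>>\mathbb C\mathbb Z_2@>>>\mathbb C
\end{CD}$$
shows that $\psi$ is an isomorphism, i.e. $G=O_n$. In either case $G\in\{O_n,O_n^*\}$, as needed.

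The genuinely hard work is already done: it sits in the classical maximality statement (Theorem~\ref{thm-proj-max}, built on the $SO_n$-irreducibility of Lemma~\ref{soadj} and the normalizer computation of Proposition~\ref{prop-normalize}) and in the short five lemma (Theorem~\ref{short}). In the assembly above, the only points needing care are checking that $p$ is a well-defined Hopf $*$-algebra map on the intermediate $\mathcal R(G)$ (done by factoring through $\mathcal R(O_n)$), verifying the commutativity of the two squares in each diagram, and ensuring that the isomorphism of left-hand terms furnished by Theorem~\ref{thm-proj-max} is compatible with the horizontal maps, so that Theorem~\ref{short} truly applies. I expect the one thing to keep straight to be the dichotomy itself: the five lemma is invoked for $\varphi$ in the first case and for $\psi$ in the second, each time on exactly the surjection one hopes is bijective.
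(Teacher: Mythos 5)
Your proposal is correct and follows essentially the same route as the paper: pass to projective versions, use the commutativity of $P\mathcal R(O_n^*)\simeq\mathcal R(PU_n)$ and the classical maximality of $PO_n\subset PU_n$ (Theorem~\ref{thm-proj-max}) to get the dichotomy, then transfer it back via the pre-exact sequences of Proposition~3.2 and the short five lemma (Theorem~\ref{short}), applied to $\varphi$ or to $\psi$ according to the case. The points you flag for care (the sign character on the intermediate algebra, commutativity of the squares, compatibility of the left-hand identification) are exactly the implicit verifications in the paper's own argument.
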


\begin{proof}
Consider a sequence of surjective Hopf $*$-algebra maps as follows, whose  composition is the canonical surjection:
$$A_o^*(n)\overset{f}\to A\overset{g}\to\mathcal R(O_n)$$

By Proposition 3.2 we get a commutative diagram of Hopf algebra maps with pre-exact rows:
$$\begin{CD}
\mathbb C@>>>PA_o^*(n)@>{i_1}>>A_o^*(n)@>{p_1}>>\mathbb C\mathbb Z_2 @>>>\mathbb C\\
@.@VV{f_|}V@VV{f}V@|@.\\
\mathbb C@>>>PA@>{i_2}>>A@>{p_2}>>\mathbb C\mathbb Z_2@>>>\mathbb C\\
@.@VV{g_|}V@VV{g}V@|@.\\
\mathbb C@>>>P\mathcal R(O_n)@>{i_3}>>\mathcal R(O_n)@>{p_3}>>\mathbb C\mathbb Z_2 @>>>\mathbb C 
\end{CD}$$
 
Consider now the following composition, with the isomorphism on the left coming from Theorem 1.5: 
$$\mathcal R(PU_n)\simeq PA_o^*(n)\overset{f_|} \to PA\overset{g_|}\to P\mathcal R(O_n)\simeq\mathcal R(PO_n)$$

This induces, at the group level, the embedding $PO_n\subset PU_n$. By Theorem \ref{thm-proj-max} $f_|$ or $g_|$ is an isomorphism. If $f_|$ is an isomorphism we get a commutative diagram of Hopf algebra morphisms with pre-exact rows:
$$\begin{CD}
\mathbb C@>>>PA_o^*(n)@>{i_1}>>A_o^*(n)@>{p_1}>>\mathbb C\mathbb Z_2
@>>>\mathbb C\\
@.@|@VV{f}V@|@.\\
\mathbb C@>>>PA_o^*(n)@>{i_2 \circ f_|}>>A@>{p_2}>>\mathbb C\mathbb Z_2  @>>>\mathbb C\\
\end{CD}$$
 
Then $f$ is an isomorphism by Theorem 3.4. Similarly if $g_|$ is an isomorphism, then $g$ is an isomorphism.
\end{proof}

Observe that the technique in the proof of Theorem \ref{main} also enables us to prove that $PO_n^* \simeq PU_n$ independently from \cite{bve}. Indeed, since $PA_o^*(n)$ is commutative, there exists a compact group $G$ with $PA_o^*(n) \simeq \mathcal R(G)$ and $PO_n \subset G \subset PU_n$. Then  Theorem \ref{thm-proj-max} gives $G=PO_n$ or $G=PU_n$. If $G=PO_n$, then as in the proof of Theorem \ref{main}, Theorem \ref{short} gives that $A_o^*(n) \twoheadrightarrow \mathcal R(O_n)$ is an isomorphism, which is false since $A_o^*(n)$ is a not commutative if $n\geq 2$. Hence $G= PU_n$.

\end{document}